\numberwithin{equation}{section}
\newtheorem{Theorem}{Theorem}[section]
\newtheorem{Corollary}[Theorem]{Corollary}
\newtheorem{Lemma}[Theorem]{Lemma}
\newtheorem{Proposition}[Theorem]{Proposition}
 { \theoremstyle{definition}
\newtheorem{Definition}[Theorem]{Definition}
\newtheorem{Example}[Theorem]{Example}
\newtheorem{Remark}[Theorem]{Remark} }
\def\la{\lambda}
\newcommand{\inv}{^{-1}}
\newcommand{\C}{\mathbb{C}}
\begin{document}
\allowdisplaybreaks

\newcommand{\arXivNumber}{2003.08616}

\renewcommand{\PaperNumber}{070}

\FirstPageHeading

\ShortArticleName{Singularities of Schubert Varieties within a Right Cell}

\ArticleName{Singularities of Schubert Varieties within a Right Cell}

\Author{Martina LANINI~$^{\rm a}$ and Peter J.~MCNAMARA~$^{\rm b}$}

\AuthorNameForHeading{M.~Lanini and P.J.~McNamara}

\Address{$^{\rm a)}$~Department of Mathematics, University of Rome ``Tor Vergata'', Italy}
\EmailD{\href{lanini@mat.uniroma2.it}{lanini@mat.uniroma2.it}}
\URLaddressD{\url{https://sites.google.com/site/martinalanini5/}}

\Address{$^{\rm b)}$~School of Mathematics and Statistics, The University of Melbourne, Australia}
\EmailD{\href{maths@petermc.net}{maths@petermc.net}}
\URLaddressD{\url{http://petermc.net/maths/}}

\ArticleDates{Received December 10, 2020, in final form July 06, 2021; Published online July 19, 2021}

\Abstract{We describe an algorithm which pattern embeds, in the sense of Woo--Yong, any Bruhat interval of a symmetric group into an interval whose extremes lie in the same right Kazhdan--Lusztig cell. This apparently harmless fact has applications in finding examples of reducible associated varieties of $\mathfrak{sl}_n$-highest weight modules, as well as in the study of $W$-graphs for symmetric groups, and in comparing various bases of irreducible representations of the symmetric group or its Hecke algebra. For example, we are able to systematically produce many negative answers to a question from the 1980s of Borho--Brylinski and Joseph, which had been settled by Williamson via computer calculations only in 2014.}

\Keywords{Schubert varieties; interval pattern embedding; Kazhdan--Lusztig cells; Specht modules}

\Classification{14M15; 20B30; 32C38; 20C08}

\section{Introduction}

Let $\mathcal{F}l_n$ be the variety of complete flags in $\C^n$. For each permutation $w\in S_n$, the Schubert variety $X_w$ is a closed subvariety of $\mathcal{F}l_n$ consisting of flags which intersect a fixed flag in a~particular way prescribed by $w$ -- see~(\ref{Eqn_SchubertVar}) for the precise definition. The Bruhat order on $S_n$ is defined by $x\leq y$ if $X_x\subseteq X_y$.

Right, left and two-sided cells were introduced by Kazhdan and Lusztig in their seminal paper~\cite{KL79} in order to construct representations of the Hecke algebra associated to a Coxeter group~$W$. In general, cells are equivalence classes on~$W$ defined using the Kazhdan--Lusztig basis of the Hecke algebra. When $W$ is a symmetric group, there is a simple combinatorial description in terms of the Robinson--Schensted correspondence: namely that two permutations belong to the same right, left, two-sided cell respectively if and only if their~$P$ symbols coincide, $Q$ symbols coincide, their $P$ (or, equivalently, $Q$) symbols have the same shape.

The key result of this paper is the following:
\begin{Theorem}\label{Thm_Main}Let $x,y\in S_n$ with $x\leq y$. Then, there exist $N\geq n$, and two permutations $v,w\in S_N$ with $v\leq w$ such that
\begin{itemize}\itemsep=0pt
\item $v$ and $w$ belong to the same right cell;
\item the singularity of $X_w$ at $v$ is smoothly equivalent to the singularity of $X_y$ at $x$.
\end{itemize}
\end{Theorem}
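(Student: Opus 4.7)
The plan is to realize the desired $(v,w)$ as the result of applying a sequence of elementary interval pattern embeddings in the sense of Woo--Yong to the input pair $(x,y)$. Since each such embedding preserves the local singularity of the Schubert variety at the corresponding point up to smooth equivalence, their composition does as well, and the problem reduces to driving the Robinson--Schensted $P$-symbols of the two endpoints to coincide; by the combinatorial description of right cells recalled in the introduction, this is precisely the condition that $v$ and $w$ lie in the same right Kazhdan--Lusztig cell.

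First I would assemble a small toolkit of elementary interval pattern embeddings $(x,y)\mapsto (x',y')$, together with an explicit record of how each operation affects the $P$-symbols of the two endpoints. The safest and easiest to verify are \emph{direct sums}: for any fixed permutation $\pi$, sending $(x,y)$ to $(x\oplus\pi, y\oplus\pi)$ or $(\pi\oplus x,\pi\oplus y)$ is always a valid interval pattern embedding, and RSK tells us exactly how the two $P$-symbols transform in tandem. More delicate building blocks are obtained by inserting a new letter at a prescribed position in both $x$ and $y$; for these one must check the Woo--Yong criterion that the insertion respects the induced Bruhat order on the interval $[x,y]$, which pins down the admissible set of positions and values.

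The core of the argument is then an inductive procedure that strictly decreases a numerical invariant measuring the discrepancy between $P(x)$ and $P(y)$ (for instance, the number of cells on which the two tableaux differ), each step being realised by one of the elementary moves above. A natural reduction is to the case of a single Bruhat cover $x\lessdot y$, where one has explicit control over how $P(x)$ and $P(y)$ differ via the Knuth moves induced by the corresponding reflection. The main obstacle will be precisely this cover step: one must produce enough auxiliary letters, inserted in carefully chosen positions, so that the insertions are simultaneously legal as a Woo--Yong interval pattern embedding and force the two resulting $P$-tableaux to become equal. Because RSK is sensitive to local perturbations in subtle ways, this verification is likely to absorb most of the technical work, and the algorithm will presumably split into cases according to the nature of the descent governing the cover.
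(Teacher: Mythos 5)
Your high-level strategy is the same as the paper's: iterate interval pattern embeddings in the sense of Woo--Yong until the two endpoints have equal $P$-symbols, then invoke the Robinson--Schensted characterisation of right cells and the Woo--Yong smooth-equivalence theorem. But the proposal stops exactly where the actual content of the proof begins, and two of the concrete suggestions you do make would not work. First, direct sums $(x,y)\mapsto(\pi\oplus x,\pi\oplus y)$ or $(x\oplus\pi,y\oplus\pi)$ are not enough: the paper's key move prepends $t$ new letters whose \emph{values} $k+1,\dots,k+t$ sit strictly between the values on which $P(x)$ and $P(y)$ already agree and those on which they first disagree (this forces a relabelling of all larger entries, equation \eqref{eq:prime}), so it is not a direct sum in either sense. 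The point of choosing the values in that gap, quantified by Lemma \ref{lem:2.2} on column insertion, is that after inserting them the two $P$-symbols are guaranteed to agree on all entries up to $k+t+1$; no choice of $\pi$ in a direct sum gives you that kind of control over where the later entries land. Second, your proposed induction is on the wrong quantities: reducing to a single Bruhat cover $x\lessdot y$ is not a legitimate reduction (the singularity of $X_y$ at $x$ is not assembled from the covers in a chain from $x$ to $y$, and the theorem must be proved for the pair $(x,y)$ itself), and the invariant ``number of cells on which $P(x)$ and $P(y)$ differ'' is not obviously decreased by any elementary move --- the paper's moves actually enlarge the tableaux, and what strictly decreases is $n_i-k_i$, where $k_i$ is the largest initial segment of values placed identically in the two tableaux.

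Finally, you correctly flag that one must verify the Woo--Yong condition $[x,y]\simeq[x',y']$ for each step, but you leave this open; in the paper it is settled cleanly by checking $\ell(x')-\ell(x)=\ell(y')-\ell(y)=(t-1)(k-1)$ and citing the length criterion of Woo--Yong. So the proposal is a reasonable research plan rather than a proof: the missing ingredients are precisely the insertion rule \eqref{eq:prime}, the column-insertion Lemma \ref{lem:2.2} that makes the induction go through, and the length computation certifying each step is an interval pattern embedding.
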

The permutations $v$, $w$ of the above theorem are obtained from $x$ and $y$ by an explicit algorithm, described in Section~\ref{sec:combinatorics}. Each step of the algorithm produces an interval pattern embedding (see Definition \ref{defn_patternavoiding}), which, by results of Woo--Yong \cite{wooyong}, induces smooth equivalences.

As a consequence, any possible behaviour of invariants of Schubert variety singularities (e.g., Kazhdan--Lusztig polynomials, their positive characteristic analogues, decomposition numbers of perverse sheaves, etc.) can appear within a single right cell. An analogous statement holds for left cells. Any of these statements imply the same result for two-sided cells.

Historically there had been some hope that the singularity of a Schubert variety $X_w$ at a~point~$v$ with~$v$ and~$w$ in the same cell would be better behaved than general singularities of Schubert varieties. Our work shows this not to be the case. An example of this hope is Borho--Brylinski and Joseph's conjecture from the 1980s that the associated variety of an $\mathfrak{sl}_n$-module would be always irreducible \cite{BB,J}. Such a question was settled only in 2014 by Williamson in \cite{williamson}. For the moment, we just want to mention that the answer provided in~\cite{williamson} is obtained by using Howlett and Nguyen's software \cite{HN} for computing $W$-graphs in {\tt MAGMA} \cite{BCP}. Thanks to our main result, this is now a consequence of the known reducibility of type~A characteristic varieties. We discuss this in greater detail in Section~\ref{Sec_BBJ}.

In Sections~\ref{Sec_01} and \ref{sec:specht} two further applications of Theorem~\ref{Thm_Main} are presented. The first one deals with the study of edge weights in $W$-graphs for symmetric groups and the so-called 0-1 conjecture, which would have implied that all the weights are either 0 or 1. Thanks to \cite{mclarnanwarrington} the conjecture was already known to be false, but we obtain in Section~\ref{Sec_01} a counterexample which does not come from computer calculations. The last application we discuss is about the comparison of several Specht module bases -- the Springer basis, the Goldie rank basis, the Kazhdan--Lusztig basis and the $p$-Kazhdan--Lusztig bases for each prime~$p$ (see Section~\ref{Sec_Applications} for more details). Our approach allows us to transfer known examples of singularities of Schubert varieties into a single cell and thus exhibit many examples where these bases differ. In fact, thanks to the torsion explosion phenomenon from \cite{explosion} we know that there are infinitely many such examples, as explained in Section~\ref{sec:specht}.

\section{Robinson--Schensted correspondence}\label{sec:combinatorics}

The Robinson--Schensted correspondence is a bijection between $S_n$ and pairs of standard tab\-leaux of the same shape with $n$ boxes. If $w\in S_n$, we write $(P(w),Q(w))$ for the corresponding pair of tableaux. The $P$ symbol $P(w)$ is obtained by successively performing $n$ column insertions into the empty tableau, with numbers $w(n),w(n-1),\dots,w(1)$ in that order. The equivalence of this with the more familiar row insertion definition follows from \cite[Lemma 7.23.15]{stanley}. The $Q$ symbol $Q(w)$ is obtained by the recording the cell added at each insertion step: after adding the $i$-th cell to $P(w)$, one adds a cell with content $i$ in the same position as the new cell in $P(w)$.

Here, and in the rest of the paper, we use the one line notation for permutations: we represent $w\in S_n$ as $[w_1\dots w_i\dots w_n]$, meaning that $w_i=w(i)$ for any $1\leq i\leq n$.
\begin{Example}\label{exple:PQsymbols}If $w=[3142]\in S_4$, the (column) insertion procedure produces the following two sequences of tableaux, and hence the $P$ and $Q$ symbols:
\[
\young(2), \ \young(2,4), \ \young(12,4), \ \young(12,34)=P(w),
\qquad
\young(1), \ \young(1,2), \ \young(13,2), \ \young(13,24)=Q(w).
\]
\end{Example}

Left and right cells are equivalence classes on a Coxeter group defined in terms of Kazhdan--Lusztig polynomials by Kazhdan and Lusztig (cf.~\cite{KL79} after Theorem~1.3). We omit the original definition here, as we only deal with the symmetric group case where a combinatorial characterisation exists. The following is
\cite[Theorem~A]{ariki} or \cite[Fact~8]{gm}, and is originally from \cite[Section~5]{KL79}.
It gives a characterisation of left and right cells in the symmetric group.

\begin{Theorem}\label{thm:defnCells}
Two permutations $x$ and $y$ in $S_n$ are in the same right $($respectively left$)$ cell if and only if $P(x)=P(y)$ $($respectively $Q(x)=Q(y))$.
\end{Theorem}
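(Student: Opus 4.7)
The plan is to route the proof through Knuth equivalence. Knuth's classical theorem asserts that two permutations in $S_n$ share the same $P$-symbol if and only if they are related by a sequence of elementary Knuth transpositions, i.e., swaps of three consecutive positions of the form $\dots y x z \dots \leftrightarrow \dots y z x \dots$ (with $x<y\le z$) or $\dots x z y \dots \leftrightarrow \dots z x y \dots$ (with $x\le y<z$). Since $P(w^{-1})=Q(w)$ and inversion swaps left and right cells, the $Q$-symbol statement reduces to the $P$-symbol one, and it suffices to show that two permutations lie in the same right cell if and only if they are Knuth equivalent.

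For the forward direction (Knuth $\Rightarrow$ same right cell) I would verify that each elementary Knuth move preserves the right cell. This is a Hecke algebra computation: given $u$ and $v$ differing by a single Knuth move, one identifies a simple reflection $s$ and a nearby permutation $w$ so that the expansion of $C_s C_w$ in the Kazhdan--Lusztig basis contains both $u$ and $v$ with nonzero coefficient; this witnesses $u\le_R v$, and symmetrically $v\le_R u$, in the right Kazhdan--Lusztig preorder, so $u$ and $v$ share a right cell. Iterating along any chain of Knuth moves between permutations with equal $P$-symbols yields the implication. A more conceptual repackaging of the same computation is provided by the Kazhdan--Lusztig--Vogan generalized $\tau$-invariant, which is by construction preserved along right cells and which, for $S_n$, separates distinct Knuth classes.

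For the converse, the previous step shows that each Knuth class $\mathcal{C}_T$ (indexed by a standard tableau $T$) is a union of right cells. To rule out further refinement, I would invoke the representation-theoretic description of cells in type~$A$: the right-cell module attached to any right cell contained in $\mathcal{C}_T$ is isomorphic to the Specht module $S^{\mathrm{sh}(T)}$, of dimension $f^{\mathrm{sh}(T)}$. By Robinson--Schensted, $|\mathcal{C}_T|$ equals the number of standard tableaux of the same shape as $T$ (the possible $Q$-symbols), which is again $f^{\mathrm{sh}(T)}$; hence $\mathcal{C}_T$ must itself be a single right cell. The principal obstacle in a fully self-contained proof is the Hecke algebra identity underlying the forward direction, where the intrinsic Kazhdan--Lusztig combinatorics has to be matched to the Robinson--Schensted bijection; everything else reduces to counting and translation between the two combinatorial languages.
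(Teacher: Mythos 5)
First, a point of comparison: the paper does not prove this theorem at all. It is quoted as a known background result, with the proof delegated to \cite[Section~5]{KL79}, \cite[Theorem~A]{ariki} and \cite[Fact~8]{gm}. So your proposal is being measured against the standard literature proofs rather than against anything in the text. Your forward direction is essentially the classical argument: the elementary Knuth moves on the one-line word are exactly the Kazhdan--Lusztig ``star'' operations (right multiplication by a simple reflection $s_i$ under the hypothesis that exactly one of two adjacent simple reflections is a right descent), and the dihedral-string analysis of \cite[Section~4]{KL79} shows that such a move is witnessed in both directions by products with $C_s$, hence preserves the right cell. One small correction: for the \emph{right} preorder the witness is $C_w C_s$, not $C_s C_w$; left multiplication generates $\leq_L$. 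With that fixed, and with Knuth's theorem and the symmetry $P\big(w^{-1}\big)=Q(w)$, this half is sound, and you correctly identify the Hecke-algebra verification as the place where the real work sits.

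The converse, however, has a genuine gap: you ``invoke the representation-theoretic description of cells in type $A$'' to assert that every right cell contained in the Knuth class $\mathcal{C}_T$ affords the Specht module $S^{\mathrm{sh}(T)}$, and in particular has exactly $f^{\mathrm{sh}(T)}$ elements. That assertion is not an independently available input --- identifying the cell module of an individual right cell (even just its dimension) is essentially equivalent to knowing that the cell coincides with its Knuth class, which is what you are trying to prove. What you get for free from the $W$-graph construction is only that the direct sum over \emph{all} right cells of the cell modules has total dimension $n!$, and that each Knuth class is a union of right cells; this does not by itself force each Knuth class to be a single cell. The references close this direction by genuinely different means: Kazhdan--Lusztig and Vogan show that the generalized $\tau$-invariant is constant on cells \emph{and} separates permutations with distinct $P$-symbols (a nontrivial combinatorial theorem special to $S_n$), while Garsia--McLarnan prove a unitriangularity statement relating the Kazhdan--Lusztig basis to Young's natural basis, which identifies the cell modules directly. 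You need to substitute one of these arguments (or an equivalent) for the circular step; once that is done, your counting $|\mathcal{C}_T|=f^{\mathrm{sh}(T)}$ does finish the proof.
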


Two elements $x$ and $y$ of a Coxeter group are in the same left cell if and only if $x\inv$ and $y\inv$ are in the same right cell.
For a standard tableau $T$ and an entry $s$ of $T$, we denote by $c_T(s)$ the column index of the entry. For example if $T=P(w)$ from Example~\ref{exple:PQsymbols}, then
\(c_T(1)=c_T(3)=1\) and \(c_T(2)=c_T(4)=2\).

\begin{Lemma}\label{lem:2.2}
Let $T$ be a standard tableau, and let $s$ be an entry of $T$. Let $r$ be such that no entries of $T$ lie in the interval $(r,s)$ and let $k\leq r-s$ be a positive integer. Choose $k-1$ integers $r<r_1<\cdots<r_{k-1}<s$. Let $T'$ be the tableau obtained from $T$ by column inserting $r_{k-1},\dots,r_1$. Then
\begin{enumerate}\itemsep=0pt
\item[$1)$] $c_{T'}(r_i)=i$ and $c_{T'}(s)=m$, where $m=\max\{c_T(s),k\}$,
\item[$2)$] if $s'$ is an entry of $T$ such that $s'>s$, then $c_{T'}(s')=c_T(s')$ or $c_{T'}(s')\leq m+n$, where $m$ is as before and $n$ is the number of entries in $T$ in $[s,s')$.
\end{enumerate}
\end{Lemma}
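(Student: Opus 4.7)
The plan is to induct on the number of insertions. Set $T_0 = T$ and let $T_i$ be the tableau obtained from $T$ by column inserting $r_{k-1}, r_{k-2}, \ldots, r_{k-i}$ in that order, so $T' = T_{k-1}$. The condition that no entries of $T$ lie in $(r,s)$ partitions the entries of $T$ into ``small'' ones ($\leq r$) and ``big'' ones ($\geq s$), and the newly inserted values $r_1,\ldots,r_{k-1}$ all lie strictly in the gap between these two classes. I would carry along three inductive invariants at stage $i$:
\begin{itemize}\itemsep=0pt
\item[$(I_1)$] the entry $r_{k-i+l-1}$ occupies column $l$ of $T_i$, for $l=1,\ldots,i$;
\item[$(I_2)$] $c_{T_i}(s) = \max\{c_T(s),\,i+1\}$;
\item[$(I_3)$] for every entry $s'>s$ of $T$, either $c_{T_i}(s') = c_T(s')$ or $c_{T_i}(s') \leq \max\{c_T(s),\,i+1\} + n$, where $n$ is the number of entries of $T$ in $[s,s')$.
\end{itemize}
Specialising to $i=k-1$ recovers both conclusions of the lemma, and the base case $i=0$ is immediate.

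For the inductive step I would analyse the bumping chain produced by the $(i+1)$-th insertion. It starts with $x_0 := r_{k-i-1}$ and I write $x_j$ for the entry bumped out of column $j$; the chain splits naturally into an ``$r$-phase'' (columns $1,\ldots,i$) followed by a ``big phase''. In the $r$-phase, invariant $(I_1)$ together with the ordering $r < r_{k-i-1} < r_{k-i} < \cdots < r_{k-1} < s$ forces $x_j = r_{k-i+j-1}$: any small entry is too small to be bumped and any big entry is too large, so the smallest entry in column $j$ exceeding $x_{j-1}$ must be the resident $r$-entry. This propagates $(I_1)$ to $T_{i+1}$ and pushes the chain into column $i+1$ carrying $x_i = r_{k-1}$, opening the big phase with a strictly increasing sequence $y_1 < y_2 < \cdots$ of bumped big entries, where $y_j$ comes from column $i+j$ of $T_i$.

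Invariant $(I_2)$ then follows from a dichotomy: if $s$ sits in column $i+1$ of $T_i$ it is the smallest big entry there (being globally smallest, a property unaffected by earlier insertions), so $y_1 = s$ and $s$ advances to column $i+2$; otherwise $s$ lies strictly further right, every $y_j$ exceeds $s$, and $s$ is undisturbed. For $(I_3)$, if $s'$ is bumped during this insertion then $s' = y_{p+1}$ for some $p\geq 0$, so $y_1,\ldots,y_p$ are $p$ distinct big entries of $T$ strictly less than $s'$; since the entries of $T$ in $[s,s')$ are exactly the $n$ big entries below $s'$, we obtain $p \leq n$, whence $s'$ moves from column $i+p+1$ to column $i+p+2 \leq (i+2)+n \leq \max\{c_T(s),\,i+2\}+n$. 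If $s'$ is not bumped, the dichotomy from stage $i$ transfers to stage $i+1$ by monotonicity of $\max\{c_T(s),\,\cdot\}$ in its second argument.

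The main technical hurdle is the bookkeeping that cleanly separates the chain into its two phases and correctly identifies the bumped element in each column. This rests crucially on the gap $(r,s)$ to rule out interference from small entries or displaced big entries during the $r$-phase, and on the observation that previous column insertions only rearrange (not duplicate or remove) the big entries of $T$, which is what makes the count $p \leq n$ a genuine statement about entries of the original tableau.
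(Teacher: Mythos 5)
Your argument is correct, and for part (1) it is essentially the paper's proof: the same induction on the number of insertions, tracking the invariants that the already-inserted $r_j$'s occupy an initial run of columns and that $s$ sits in column $\max\{c_T(s),i+1\}$, with the gap $(r,s)$ forcing each insertion's bumping chain to cascade the $r$-entries one column to the right before anything else can move. For part (2) your route differs from the paper's. The paper fixes $s'$ and inducts on $n$, the number of entries of $T$ in $[s,s')$: it argues that across all the insertions $s'$ can only ever be displaced either by the block $r_1,\dots,r_{k-1},s$ (giving $c_{T'}(s')\leq m+1$) or by some $s_i\in(s,s')$, in which case $c_{T'}(s')\leq c_{T'}(s_i)+1$ and the inductive hypothesis applies to $s_i$. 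You instead fold part (2) into the same per-insertion induction as part (1): within a single bumping chain the big entries bumped before $s'$ form a strictly increasing sequence of elements of $T\cap[s,s')$, so at most $n$ of them precede $s'$ and the landing column is at most $(i+2)+n$. Your version is more self-contained and avoids the slightly delicate bookkeeping in the paper's phrase ``$s'$ will always be bumped out by $s_i$, unless it lands on a column which contains $s_j$''; the paper's version gives the marginally sharper information that the bound degrades by exactly one column per intermediate entry that actually intervenes. Both are valid; the one point worth making explicit in your write-up is that the chain cannot touch $s'$ during the $r$-phase (which your column-by-column identification of the bumped entries does establish), so that every displacement of $s'$ is indeed accounted for by the count $p\leq n$.
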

\begin{proof}
 By induction on $t$, we show that if we column insert $r_{k-1}, \dots, r_{k-t}$, then
\begin{itemize}\itemsep=0pt
\item $r_{k-t+i}$ lies in the $(i+1)$-st column for $0\leq i<t$,
\item $s$ lies in the $m$-th column, for $m=\max\{c_T(s),t+1\}$.
\end{itemize}
The base case $t=0$ is tautologically true. Now suppose we have inserted $r_{k-1}, \dots, r_{k-t+1}$, in accordance with the inductive hypothesis. After these insertions, $s$ will be placed in column $\max\{c_T(s),t\}$. When we column insert $r_{k-t}$, any $r_{k-t+i}$ bumps out $r_{k-t+i+1}$ from the $i$-th column. In particular, $r_{k-1}$ is placed in the $t$-th column and $s$ gets bumped out if and only if $\max\{c_T(s),t\}=t$, that is $c_T(s)\leq t$. This proves the first part of the lemma.

 We now prove the second statement by induction on $n$. Let $s_1<\dots<s_{n-1}$ be the elements of $(s,s')$ that lie in $T$.
 During the column insertions $s'$ can be bumped out only by the entries between $r$ and $s'$. By the previous part of this lemma, the entries $r_1, \dots,r_{k-1}, s$ lie in the first~$m$ columns of $T'$, and hence can bump $s'$ out from its box only if $c_{T}(s')\leq m$ and none of the entries $s_1, \dots, s_{n-1}$ appear in the first $m$ columns. If this is the case, $c_{T'}(s')\leq m+1$. Clearly, the entries $s_1,\dots,s_{n-1}$ can bump $s'$ out of its box only if they move. Hence, assume that $s'$ gets bumped out from its box the first time by an $s_i$, then $s'$ will always be bumped out by~$s_i$, unless it lands on a column which contains $s_j$ for some $j$. If this is the case, then $s'$ stays on that column. We conclude that $c_{T'}(s')\leq c_{T'}(s_{i})+1$, and hence the statement follows by induction.
\end{proof}

Let $x\in S_n$ and $v\in S_N$ (for some $n\leq N$). We say that the pattern of $v$ in the last $n$ positions is $x$ if it holds that $v(N-n+i)<v(N-n+j)$ if and only if $x(i)<y(j)$ for all $1\leq i<j\leq n$.

Thanks to the first part of the previous lemma, we are now able to prove the following central result.

\begin{Theorem}\label{thm:asyouwish}
Given $x,y\in S_n$, there exist $v,w\in S_N$ for some $N\geq n$ with $P(v)=P(w)$, $v(i)=w(i)$ for $i\leq N-n$ and with the pattern of $v$ and $w$ in the last $n$ positions being the permutations $x$ and $y$.
\end{Theorem}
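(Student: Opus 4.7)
The proof is by explicit construction. The plan is to pick $N\gg n$, choose $n$ widely-spaced integers $s_1<s_2<\cdots<s_n$ in $[N]$, and define the suffix of $v$ (respectively $w$) by $v(N-n+i):=s_{x(i)}$ (respectively $w(N-n+i):=s_{y(i)}$), placing the remaining $N-n$ integers in the common prefix $v(1),\ldots,v(N-n)=w(1),\ldots,w(N-n)$ in an order to be designed. The conditions $v(i)=w(i)$ for $i\leq N-n$ and the pattern condition on the last $n$ positions are then immediate from the construction; only the condition $P(v)=P(w)$ demands real work. Column-inserting the suffix in reverse produces the tableau $P_x^+$, which is precisely $P(x)$ with the entry $i$ relabeled $s_i$, and similarly $P_y^+$. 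So the task reduces to designing the prefix order so that column-inserting it (in reverse) takes both $P_x^+$ and $P_y^+$ to one and the same tableau.

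The main tool is Lemma~\ref{lem:2.2}(1). The core maneuver is a \emph{push}: given the current tableau and a distinguished entry $s=s_i$, one column-inserts $k-1$ prefix values chosen from the empty interval $(s_{i-1},s_i)$ (with $s_0:=0$). By the lemma, after such a push $s_i$ lies in the predetermined column $\max\{c_T(s_i),k\}$, and the $k-1$ inserted values occupy columns $1,\ldots,k-1$; moreover any entry of value at most $s_{i-1}$ is untouched, since the inserted values all lie strictly above $s_{i-1}$. Choosing $k$ large enough to dominate the initial column of $s_i$ in both $P_x^+$ and $P_y^+$ forces $s_i$ into the same target column in both tableaux, and by making the gaps $(s_{i-1},s_i)$ generous (enlarging $N$ if needed) we always have enough available prefix values to take $k$ as large as we like.

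The main obstacle is that a push also perturbs entries larger than $s_i$, and Lemma~\ref{lem:2.2}(2) gives only a weak upper bound on where they re-land. The plan to overcome this is to iterate pushes in a carefully chosen order (for instance processing the $s_i$ in decreasing order of $i$, so the largest entries are pushed first into far-right columns before smaller entries are handled) and to insert further alignment pushes as needed. The convergence argument uses the explicit row/column data of Lemma~\ref{lem:2.2}(1) for the pushed entry and the newly inserted values, combined with Lemma~\ref{lem:2.2}(2) to bound the motion of the remaining ones; the generous size of $N$ provides enough prefix material to repeatedly realign. Once the two tableaux have been driven into coincidence, any leftover prefix values are inserted in a fixed common order and preserve the equality, giving $P(v)=P(w)$ and completing the construction.
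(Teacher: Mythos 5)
Your setup and your key maneuver (the ``push'' via Lemma~\ref{lem:2.2}(1)) are exactly the engine of the paper's proof, and your own observation that a push with values from the gap $(s_{i-1},s_i)$ leaves every entry $\leq s_{i-1}$ untouched is precisely the monotonicity fact that makes the argument close. But you then process the $s_i$ in \emph{decreasing} order of $i$, and this is the wrong direction: when you later push $s_{i-1}$ using values from $(s_{i-2},s_{i-1})$, those values are smaller than $s_i,\dots,s_n$ and can bump all of them, generally differently in the two tableaux (which are not yet equal, and whose entries strictly between $s_{i-1}$ and $s_n$ sit in different configurations), so the alignment already achieved for the larger entries is destroyed; Lemma~\ref{lem:2.2}(2) only bounds where they land, it does not say they land in the same place. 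Your proposed repair --- ``insert further alignment pushes as needed'' with a ``convergence argument'' --- is not a proof: you exhibit no quantity that decreases, each realignment of a large entry consumes fresh gap values below it and can itself be undone by the next push of a smaller entry, and $N$ together with all the gaps must be fixed before the construction begins, so ``enlarging $N$ if needed'' inside a loop of unknown length is not available.

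The fix is to reverse the order and process $i=1,2,\dots,n$ increasingly. The push for $s_i$ uses values in $(s_{i-1},s_i)$, all larger than $s_1,\dots,s_{i-1}$ and than every previously inserted value, so (by your own remark) it moves none of them; it forces $s_i$ into column $\max\{c(s_i),k\}$, which is the same in both tableaux once $k$ dominates both current column indices, and places the $k-1$ new values in columns $1,\dots,k-1$ of both. Since a standard tableau is determined by the column index of each of its entries, the invariant ``everything up to and including $s_i$ agrees in the two tableaux'' is preserved and strengthened at each step, so after $n$ pushes the tableaux coincide; Lemma~\ref{lem:2.2}(2) is then needed only to bound $N$, not for termination. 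This increasing-order loop is exactly the paper's induction on $n-k$, where $k$ is the largest value up to which $P(x)$ and $P(y)$ already agree and each step inserts $t$ new values into the gap just below the first disagreement.
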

\begin{proof}Let $k$ be the largest integer such that $P(x)$ and $P(y)$ have all entries less than or equal to $k$ in the same place. (If $P(x)=P(y)$ we set $k=n$.) We perform an induction on the value of $n-k$, the case $n-k=0$ being trivial. Define $t$ by
\[
t=\max\{c_{P(x)}(k+1), c_{P(y)}(k+1)\}-1.
\]
Let $n'=n+t$ and define $x',y'\in S_{n'}$ by{\samepage
\begin{equation}\label{eq:prime}
x'(i)=\begin{cases}
k+i & \mbox{if}\quad i\leq t,\\
x(i-t) & \mbox{if}\quad i>t \mbox{ and } x(i-t)\leq k, \\
x(i-t)+t &\mbox{if}\quad i>t \mbox{ and } x(i-t)>k,
\end{cases}
\end{equation}
with $y'$ defined similarly from $y$.}

By the description of $P$ in terms of column insertion, $P(x')$ and $P(y')$ are obtained by inserting the numbers $k+t$, $k+t-1$, \dots, $k+1$, into the tableau obtained from $P(x)$ and $P(y)$ by adding~$t$ to all entries greater than $k$. By Lemma \ref{lem:2.2}(1), $P(x')$ and $P(y')$ agree for all entries less than or equal to $k+t+1$ (by the choice of $t$).

Let $k'$ be defined from $x'$ and $y'$ analogously to the definition of $k$. Then $n'-k'\leq (n+t)-(k+t+1)<n-k$. So by induction there exists $v$ and $w$ with $P(v)=P(w)$ and the pattern of~$v$ and $w$ in the last $n'$ positions being the permutations~$x'$ and~$y'$. Since the patterns of~$x'$ and~$y'$ in the last~$n$ positions are those of~$x$ and~$y$ respectively, these~$v$ and~$w$ satisfy the conditions of the theorem, completing the proof.
\end{proof}

\begin{Example}\label{eg:ks}
If $x=[2 1 6 5 4 3 8 7]$ and $y=[62845173]$, then the proof outputs the pair $v=[895621a743cb]$ and $w=[8956a2c471b3]$, where $a=10$, $b=11$, $c=12$. This example involves two invocations of Lemma~\ref{lem:2.2}.
\end{Example}

We choose this example because the singularity of $X_y$ at $x$ is the Kashiwara--Saito singularity \cite[Example~8.3.1]{kashiwarasaito} and to point out that the pair $v,w\in S_{12}$ that we obtain is different from the permutations chosen in \cite[Section~3.5]{williamson}. The relevance of this example will be discussed in more detail in Section~\ref{Sec_BBJ}.

\begin{Proposition}\label{prop:bound}
In Theorem {\rm \ref{thm:asyouwish}} above, we can always take $N\leq n(n+1)/2$.
\end{Proposition}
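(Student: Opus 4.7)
The plan is to prove, by strong induction on $d := n - k$, that for any $(x, y) \in S_n^2$ whose common sub-tableau on entries $\leq k$ has first row length $w$, the algorithm of Theorem~\ref{thm:asyouwish} terminates in some $N$ with
\[
N \;\leq\; n + dw + \frac{d(d-1)}{2}.
\]

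The inductive step rests on two bounds from Lemma~\ref{lem:2.2}. First, $t \leq w$: the entry $k+1$ in each of $P(x)$ and $P(y)$ must sit at an addable corner of the common sub-tableau (its western and northern neighbours, when they exist, belong to that common part), and every addable corner of a shape whose first row has length $w$ lies in a column of index at most $w+1$; hence $t = \max(c_{P(x)}(k+1), c_{P(y)}(k+1)) - 1 \leq w$. Second, writing $\epsilon := (k' - k) - (t + 1) \geq 0$ for any ``extra agreement'' gained at the step, I will show $w' \leq w + 1 + \epsilon$: by Lemma~\ref{lem:2.2}(1) the first $t+1$ new common entries occupy columns $1, \dots, t+1 \leq w+1$ (contributing at most one new column), and each of the $\epsilon$ further new common entries enlarges the width by at most one.

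Feeding these into the inductive hypothesis for $(x', y')$ gives
\[
N \;\leq\; (n + t) + (d - 1 - \epsilon)(w + 1 + \epsilon) + \frac{(d - 1 - \epsilon)(d - 2 - \epsilon)}{2}.
\]
Using $t \leq w$ and expanding, the difference between the target $n + dw + d(d-1)/2$ and this right-hand side simplifies to $\epsilon\bigl(w + (1+\epsilon)/2\bigr) \geq 0$, closing the induction. The base case $d = 0$ is immediate ($N = n$).

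Applied to the initial data $d = n - k_0$ and $w_0 \leq k_0$ (a tableau with $k_0$ entries has first row of length at most $k_0$), the bound becomes $N \leq n + (n-k_0)k_0 + (n-k_0)(n-k_0-1)/2$; since its consecutive differences in $k_0$ equal $-k_0$, the right-hand side is maximised at $k_0 = 1$ with value $n(n+1)/2$. I expect the main obstacle to be verifying the width inequality $w' \leq w + 1 + \epsilon$ cleanly when extra agreement is present; the algebra at the end is routine.
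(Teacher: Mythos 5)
Your proof is correct, and it takes a genuinely different route from the paper's. The paper bounds each increment $t_i$ by tracking column indices of the entries above $k_{i-1}+t_{i-1}$ through Lemma~\ref{lem:2.2}(2), proving $c_{P(x_i)}(l)\leq l-(n_i-n)$ by induction on the iteration, deducing $t_i\leq n-(n_i-k_i)$, and summing against the strictly decreasing sequence $n_i-k_i$. You instead track the first-row length $w$ of the common subtableau: your bound $t\leq w$ comes from the observation that the entry $k+1$ occupies an addable corner of the shape of the common part (which is correct -- its western and northern neighbours carry smaller entries), and your width recursion $w'\leq w+1+\epsilon$ follows from Lemma~\ref{lem:2.2}(1) plus the fact that each extra agreed entry adds one cell. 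I checked the algebra: with $d'=d-1-\epsilon$ the deficit does simplify to $\epsilon\bigl(w+(1+\epsilon)/2\bigr)\geq 0$, and the terminal optimisation over $k_0\geq 1$ (valid since the entry $1$ always sits at $(1,1)$ of both $P$-symbols) gives $n(n+1)/2$ at $k_0=1$. What each approach buys: the paper's argument is shorter once Lemma~\ref{lem:2.2}(2) is in hand, but that part of the lemma and its deployment is the more delicate portion of their write-up; your argument bypasses part (2) of the lemma entirely, is self-contained modulo part (1), and tracks an arguably more natural invariant of the algorithm, at the cost of a slightly heavier closed-form induction.
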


\begin{proof}
The inductive proof of Theorem \ref{Thm_Main} gives an algorithm for constructing $u$ and $v$ from~$x$ and~$y$. It produces a sequence of permutations $\{x_i\}$, $\{y_i\}$ where $x_0=x$, $y_0=y$ and $x_{i+1}=x_i'$ and $y_{i+1}=y_i'$ are defined as in~(\ref{eq:prime}) where $k_i+1$ is the minimal entry which appears in a different place in $P(x_i)$ and $P(y_i)$, and $t_i=\max\{c_{P(x_i)}(k_i+1),c_{P(y_i)}(k_i+1)\}-1$. Let $n_i=n+\sum_{j=0}^{i-1}t_j$ be the index of the symmetric group~$x_i$ and~$y_i$ lie in.

We prove now by induction on $i\geq 1$ that for any $l>k_{i-1}+t_{i-1}$
Let $i=1$. By Lemma~\ref{lem:2.2}(2), either $c_{P(x_1)}(l)=c_{P(x)}(l-t_0)$ or $c_{P(x_1)}(l)\leq (t_0+1)+l-(k_0+1+t_0)$. In the first case,
the thesis follows from $c_{P(x)}(l-t_0)\leq l-t_0=l-n_1+n$, as $P(x)$ is standard. As for the second case, we just notice that $k_0\geq t_0$, and hence also in this case we have $c_{P(x_1)}(l)\leq l-t_0$.

The induction step is proven analogously: by Lemma \ref{lem:2.2}(2) we can distinguish the two cases $c_{P(x_i)}(l)=c_{P(x_{i-1})}(l-t_{i-1})$ and $c_{P(x_i)}(l-t_{i-1})\leq (t_{i-1}+1)+l-(k_{i-1}+1+t_{i-1})=l-k_{i-1}$. In the first case, the thesis follows by induction, while for the second case it is enough to observe that $k_{i-1}\geq n_i-n$, which follows inductively from $k_i\geq k_{i-1}+t_{i-1}$.

Clearly, the same upper bound is obtained for $c_{P(y_i)}(l)$. Since $k_{i}+1> k_{i-1}+t_{i-1}$, we get
\[
c_{P(x_i)}(k_i+1), c_{P(y_i)}(k_i+1)\leq k_i+1-n_i+n.
\]
Therefore $t_i\leq n-(n_i-k_i)$. We have $N=n+\sum_i t_i$. The proof of Theorem \ref{thm:asyouwish} shows that the sequence $n_i-k_i$ is a strictly decreasing sequence of positive integers and that we have at most $n-1$ iterations. Therefore $N\leq n+\sum_{j=1}^{n-1}j=n(n+1)/2$.
\end{proof}

\section{Interval pattern embeddings}
Pattern avoidance has been applied to investigate properties of Schubert varieties for several decades. The first, and arguably most famous, instance of this is Lakshmibai--Sandhya's criterion for smoothness~\cite{lakshmibaisandhya}. We refer the interested reader to~\cite{patternavoidance} for a nice survey on the power of pattern avoidance techniques and generalisations.

Here we exploit Woo--Yong's approach of interval pattern embeddings, which gives a sufficient condition for when two singularities in different Schubert varieties are smoothly equivalent.

We first recall the notion of pattern embedding:

\begin{Definition}[{\cite[Section~2.1]{wooyong}}]\label{defn_patternavoiding} Let $2\leq n\leq m$.
\begin{enumerate}\itemsep=0pt
\item Let $y\in S_n$ and $w\in S_m$. We say that $y$ embeds into $w$ if there are indices $1\leq \varphi_1< \dots< \varphi_n\leq m$ such that $w(\varphi_j)<w(\varphi_{k})$ if and only if $y(j)<y(k)$ for any $1\leq j< k\leq n$. In this case we say that the set $\Phi=\{\varphi_1, \dots, \varphi_n\}$ is an embedding of $y$ into $w$.
\item Let $[x,y]$ and $[v,w]$ be two intervals in the Bruhat orders on $S_n$ and $S_m$. We say that $[x,y]$ embeds into $[v,w]$ if there is a common embedding $\Phi=\{\varphi_1, \dots, \varphi_n\}$ of $x$ into $v$ and of $y$ into $w$, where the entries of $v$ and $w$ agree outside of $\Phi$, and $[x,y]\simeq [v,w]$ as posets.
\end{enumerate}
\end{Definition}

Now we briefly recall the definitions of Schubert and Richardson varieties.

For $v_1, \dots, v_j\in \mathbb{C}^n$, we denote by $\langle v_1, \dots, v_j\rangle$ the $\mathbb{C}$-subspace of $\mathbb{C}^n$ that they span.
 Let $e_1, e_2, \dots, e_n$ be the standard basis of $\mathbb{C}^n$, and, for $j=1, \dots, n-1$, we set
\[
E_j=\langle e_1,\dots, e_j\rangle,\qquad\hbox{and}\qquad E^{\textrm{opp}}_j=\langle e_n, e_{n-1}, \dots, e_{n-j+1}\rangle.
\]

Write $V_\bullet=\{V_0\subset V_1\subset\cdots \subset V_n\}$ for a complete flag of subspaces of $\C^n$ with $\dim V_p=p$.

Let $w\in S_n$. The Schubert variety is defined by
\begin{equation}\label{Eqn_SchubertVar}
X_w= \big\{V_\bullet\in\mathcal{F}l_n\mid
\dim(V_p\cap E_q)\geq k_{p,q}, \ 1\leq p,q\leq n \big\},
\end{equation}
where $k_{p,q}=\#\{i\leq p\mid w(i)\leq q\}$.
For $v\in S_n$, the opposite Schubert variety is defined by
\begin{equation*}
X^v= \big\{V_\bullet\in\mathcal{F}l_n\mid
\dim\big(V_p\cap E^{\textrm{opp}}_q\big)\geq h_{p,q}, \ 1\leq p,q\leq n \big\},
\end{equation*}
where $h_{p,q}=\#\{i\leq p\mid v(i)\geq n+1-q\}$.

For a pair of permutations $x\leq y\in S_n$, we define the Richardson variety
\[
 X^x_y=X_y\cap X^x.
\]

The relevant result for us is the following.

\begin{Theorem}[{\cite[Theorem 4.2]{wooyong}}]\label{Thm_WY} Let $\Phi$ be an interval pattern embedding of $[x,y]$ into $[v,w]$. Then the affine neighbourhoods of $X_y$ and $X_w$ respectively at $x$ and $v$ are isomorphic up to a~Cartesian product with an affine space.
\end{Theorem}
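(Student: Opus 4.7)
My plan is to work in the Kazhdan--Lusztig opposite cell coordinates around $v$, in which an affine neighbourhood $U_v\subseteq\mathcal{F}l_m$ of $v$ is parameterised by matrices $M_v$ having a $1$ in position $(v(j),j)$ for each $j$, zeros strictly to the right of each such $1$ in its row and zeros strictly below, and free entries elsewhere; this realises $U_v\simeq\mathbb{A}^d$ with $d$ equal to the number of free positions. Inside $U_v$, the slice $X_w\cap U_v$ is cut out by the northwest rank conditions coming from (\ref{Eqn_SchubertVar}), and by Fulton's essential set reduction one only needs these conditions at the essential corners of the diagram of $w$. The analogous setup gives an affine neighbourhood $U_x$ of $x$ in $\mathcal{F}l_n$ and corresponding defining equations for $X_y\cap U_x$.

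The main step is to use the embedding $\Phi=\{\varphi_1,\dots,\varphi_n\}$, together with the hypothesis that $v$ and $w$ agree outside $\Phi$ (so $v(\Phi)=w(\Phi)$ setwise), to identify variables: the free entries of $M_v$ lying in rows indexed by $v(\Phi)$ and columns indexed by~$\Phi$ are matched bijectively with the free entries of $M_x$, while the remaining free entries of $M_v$ are collected into a complementary factor $\mathbb{A}^N$. The pattern condition ensures that the numerical rank bounds $k_{p,q}$ attached to essential corners of $w$ lying in the pattern subgrid $v(\Phi)\times\Phi$ coincide with the corresponding data for $y$, so after the identification the corresponding rank inequalities translate exactly into the defining inequalities of $X_y\cap U_x$.

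The main obstacle is controlling the remaining essential conditions, namely those whose row or column indices do not both lie in the pattern subgrid. I would verify that each such inequality is either automatically satisfied on $U_v$ thanks to the rigid pattern of forced $0$'s and $1$'s in $M_v$ outside the pattern rows and columns, or is implied by conditions already collected. The cleanest route is via Fulton's characterisation of essential sets: one shows that, because the patterns of $v$ and $w$ in positions $\Phi$ are precisely $x$ and $y$, the essential set of $w$ lies inside the $v(\Phi)\times\Phi$ subgrid and is mapped bijectively to the essential set of $y$. Once this combinatorial matching of essential sets is in place, the ideal of $X_w\cap U_v$ is generated by the image of the ideal of $X_y\cap U_x$ with no condition on the remaining $N$ variables, yielding $X_w\cap U_v\simeq(X_y\cap U_x)\times\mathbb{A}^N$ and hence the claimed smooth equivalence.
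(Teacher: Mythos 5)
The paper offers no proof of this statement---it is imported verbatim from \cite{wooyong}---so your attempt can only be measured against Woo--Yong's argument. Your framework (affine coordinates around $v$, northwest rank conditions, Fulton's essential sets) is indeed the setting in which they work, but your proof has a fatal structural gap: nowhere do you use the hypothesis that $[x,y]\simeq [v,w]$ as posets (equivalently, given the other conditions, that $\ell(w)-\ell(v)=\ell(y)-\ell(x)$). The theorem is false without this hypothesis, so no argument that invokes only ``common embedding $\Phi$ plus agreement of $v$ and $w$ outside $\Phi$'' can succeed. Concretely, take $x=[12]$, $y=[21]$, $v=[1234]$, $w=[4231]$, $\Phi=\{1,4\}$: every condition you actually use is satisfied, yet $X_y$ is smooth at $x$ while $X_w$ is singular at $v$ (here $P_{v,w}=1+q$). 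The step of yours that breaks is precisely the essential-set claim: the essential set of $[4231]$ is $\{(1,3),(3,1)\}$, disjoint from the subgrid $v(\Phi)\times\Phi=\{1,4\}\times\{1,4\}$, even though the patterns of $v$ and $w$ at $\Phi$ are exactly $x$ and $y$.

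Moreover, your two key claims fail even for genuine interval pattern embeddings. Take $v=[132]\leq w=[231]$ with $\Phi=\{1,3\}$, a bona fide interval embedding of $[[12],[21]]$. The essential set of $w$ is $\{(2,1)\}$ while that of $y=[21]$ is $\{(1,1)\}$, so the claimed bijection does not hold under the subgrid identification. Worse, in the opposite-cell coordinates $z_{21},z_{31}$ around $v$ (one pattern variable, one ``remaining'' variable), the slice $X_w\cap U_v$ is cut out by the single equation $z_{31}=0$ on the remaining variable, whereas the image of the ideal of $X_y\cap U_x$ is zero; so the ideal of $X_w\cap U_v$ is emphatically not ``generated by the image of the ideal of $X_y\cap U_x$ with no condition on the remaining variables,'' and the product decomposition you propose (pattern variables times free extra variables) has the wrong dimension. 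What is true, and what any correct proof must establish, is that the projection onto the pattern coordinates restricts to an isomorphism (up to an affine factor) from the Kazhdan--Lusztig slice at $v$ to the one at $x$; the extra coordinates are constrained on the slice, not free. Proving this requires the interval hypothesis, which in \cite{wooyong} enters through the length equality $\ell(w)-\ell(v)=\ell(y)-\ell(x)$ (their Lemma~2.1): the naive comparison of rank conditions only yields a containment of slices, and one needs reducedness, irreducibility and the equality of dimensions $\ell(w)-\ell(v)=\ell(y)-\ell(x)$ to upgrade it. As written, your argument proves a false statement and therefore cannot be correct.
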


\begin{Remark}[{\cite[Theorem 1]{Woo}}]
Under the same conditions, we also get an isomorphism of Richardson varieties $X_y^x$ and $X_w^v$.
\end{Remark}

We are now ready to prove Theorem \ref{Thm_Main}.

\begin{proof}[Proof of Theorem \ref{Thm_Main}] By Theorem \ref{Thm_WY}, we only have to show that any iteration of our algorithm produces an interval pattern embedding. We keep the same notation as in the proof of Theorem \ref{thm:asyouwish}.

 By the explicit formula \eqref{eq:prime}, we see that $\Phi=\{t, t+1, \dots, n+t-1\}$ gives a common embedding of $x$ into $x'$ and $y$ into $y'$ such that $x'(j)=y'(j)$ for any $1\leq j<t$.

We are now left to show that $[x,y]\simeq [x',y']$ as posets. By \cite[Lemma 2.1]{wooyong}, it is enough to check that
\[
\ell(y)-\ell(x)=\ell(y')-\ell(x'),
\]
where $\ell(w)=\#\{i<j\mid w(i)>w(j)\}$ for any permutation $w$. This is an immediate consequence of \eqref{eq:prime}, from which we see that
\[\ell(x')=\ell(x)+(t-1)(k-1) \qquad\hbox{and}\qquad\ell(y')=\ell(y)+(t-1)(k-1).\tag*{\qed}\]\renewcommand{\qed}{}
\end{proof}

\section{Some applications} \label{Sec_Applications}

\subsection{Associated varieties of highest weight modules}\label{Sec_BBJ} A detailed account of the constructions and properties we mention in this section can be found, for example, in \cite[Section~1]{tanisaki}.

Let $\rho$ be half the sum of the positive roots of $\mathfrak{sl}_n(\mathbb{C})$. For a permutation $w\in S_n$, we denote by $L_w$ the simple $\mathfrak{sl}_n(\mathbb{C})$-module of highest weight $-w(\rho)-\rho$. Denote by $\mathcal{D}$ the sheaf of complex algebraic linear differential operators on the flag variety $\mathcal{F}l_n$ and by $\mathcal{L}_w$ the $\mathcal{D}$-module corresponding to $L_w$ under the Beilinson--Bernstein correspondence, that is the $IC$-extension of the constant local system $\underline{\mathbb{C}}_{C_w}$ of the cell $C_w$. Its characteristic variety $\tt{Ch}(\mathcal{L}_w)$ is a subvariety of the cotangent bundle $T^*\mathcal{F}l_n$ and the corresponding characteristic cycle is a~$\mathbb{Z}_{\geq 0}$-linear combination of the classes of the (closures) of conormal bundles of the Schubert cells:
\[
CC(\mathcal{L}_y)=\sum_{x\leq y}m_{x,y}[\overline{T^*_x\mathcal{F}l_n}].
\]
Determining the numbers $m_{x,y}$ is a natural (but in general very hard) question. As $m_{y,y}=1$, the characteristic variety $\tt{Ch}(\mathcal{L}_y)$ is irreducible if and only if $m_{x,y}=0$ for any $x\neq y$.
The $m_{x,y}$'s only depend on the singularity type of the Schubert variety $X_y$ at $x$ (see \cite[Section~3.2]{vilonenwilliamson}). Thus, we have the following corollary of Theorem~\ref{Thm_Main}:
\begin{Corollary}\label{Cor:BBJ}
Let $x$, $y$, $v$, $w$ be as in Theorem~{\rm \ref{Thm_Main}}. Then $m_{x,y}=m_{v,w}$.
\end{Corollary}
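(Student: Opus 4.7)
The plan is that this corollary should reduce immediately to the two inputs already collected in this section, so the proof is essentially a one-line composition. First I would recall what Theorem~\ref{Thm_Main} actually supplies: permutations $v,w \in S_N$ lying in the same right cell together with the assertion that the pointed pair $(X_w,v)$ is smoothly equivalent to the pointed pair $(X_y,x)$. Smooth equivalence here is the one furnished by Theorem~\ref{Thm_WY} via interval pattern embedding, i.e.\ the affine neighbourhoods of $X_y$ at $x$ and of $X_w$ at $v$ become isomorphic after taking a Cartesian product with an affine space.

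Next I would invoke the fact cited in the paragraph preceding the corollary: by Vilonen--Williamson \cite[Section~3.2]{vilonenwilliamson}, the multiplicity $m_{x,y}$ depends only on the local singularity type of $X_y$ at $x$. Since multiplying by an affine space does not change the local étale/analytic singularity type, this invariance passes through the equivalence from Theorem~\ref{Thm_WY}. Composing these two statements gives $m_{x,y}=m_{v,w}$, which is exactly the claim.

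There is really no obstacle to overcome here; the content lies entirely in the two results being invoked. If I wanted to be slightly more careful, the one point worth spelling out is that the characteristic cycle and its coefficients are constructible-sheaf-theoretic local invariants (since $\mathcal{L}_y$ is the $IC$-extension of the constant sheaf on the cell, and the stalks of its microlocalisation at $x$ depend only on a neighbourhood of $x$ in $X_y$), so they transfer cleanly through a smooth morphism with connected fibres of the kind produced by the pattern embedding. Once that is acknowledged, the corollary is immediate from Theorem~\ref{Thm_Main}.
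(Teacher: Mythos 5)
Your argument is exactly the paper's: the corollary is stated as an immediate consequence of the smooth equivalence from Theorem~\ref{Thm_Main} (via Theorem~\ref{Thm_WY}) together with the fact, cited from Vilonen--Williamson, that the coefficients $m_{x,y}$ depend only on the singularity type of $X_y$ at $x$. Your extra remark about the microlocal invariants being insensitive to a product with affine space is a fair gloss on why the cited invariance applies, but it adds nothing beyond what the paper already takes for granted.
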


The associated variety $V(L_w)$ can be obtained as the image of $\tt{Ch}(\mathcal{L}_w)$ under the moment map $\gamma\colon T^*\mathcal{F}l_n\rightarrow \mathfrak{g}^*$. This variety $V(L_w)$ is irreducible if and only if $m_{x,y}=0$ for any $x\neq y$ such that $x$ and $y$ lie in the same two-sided Kazhdan--Lusztig cell (see \cite[Introduction]{williamson} for a~sketch of a proof of this fact).

While several examples of reducible characteristic varieties have been known for a long time, it was conjectured that the associated varieties for $\mathfrak{sl}_n(\mathbb{C})$-highest weight modules were irreducible (cf.\ \cite[Conjecture~4.5]{BB} and \cite[Section~10.2]{J}). The first example of a reducible associated variety was exhibited by Williamson in~\cite{williamson}.
This was found by a computer search motivated by constraints that must be satisfied in order for torsion to occur in the intersection cohomology of Schubert varieties.

Williamson's example ended up producing a singularity in the $S_{12}$ flag variety where~$x$ and~$y$ belong to in the same right cell and $m_{x,y}\neq 0$. It was then shown that this singularity is smoothly equivalent to the one studied by Kashiwara and Saito~\cite{kashiwarasaito} where the corresponding characteristic variety was shown to be reducible. Since the $m_{x,y}$'s only depend on the singularity type of the Schubert variety $X_y$ at $x$, this yields a reducible associated variety.
Corollary~\ref{Cor:BBJ} gives a simpler and systematic method for producing similar examples. See Example~\ref{eg:ks}.

\begin{Remark}By \cite[Section~3.1(A)]{tanisaki}, the irreducibility of an associated variety is equivalent to the coincidence of two bases (the \emph{Goldie rank} basis and the \emph{Springer basis}) of a complex irreducible $S_n$-representation, so that our algorithm also provides a method to determine examples of representations for which the two bases differ. A similar question is addressed in Section~\ref{sec:specht}, where we explain how to apply Theorem~\ref{Thm_Main} to the comparison of bases of Specht modules for the Hecke algebra of a symmetric group.
\end{Remark}

\subsection[The 0-1 conjecture and edges of W-graphs for symmetric groups]{The 0-1 conjecture and edges of $\boldsymbol{W}$-graphs for symmetric groups}\label{Sec_01}

Let $x,y\in S_n$.
Let $\mu_{x,y}$ be the coefficient of $q^{(\ell(y)-\ell(x)-1)/2}$ in the Kazhdan--Lusztig polynomial $P_{x,y}(q)$, where $\ell(z)$ denotes the number of inversions of $z\in S_n$. This is the highest possible monomial that can occur in $P_{x,y}(q)$.
The \emph{$0$-$1$ conjecture} stated that $\mu_{x,y}\in \{0,1\}$. This conjecture was disproven in~\cite{mclarnanwarrington} by producing a pair of permutations $x,y\in S_{10}$ with $\mu_{x,y}=4$ via combinatorial methods. We note here that in their example $x$ and $y$ belong to different left cells.

A weaker hope was that the 0-1 conjecture was valid once restricted to pairs of permutations lying in the same left cell.

The relevance of such a conjecture is in the representation theory of the symmetric group Hecke algebra. Indeed, in \cite{KL79} Kazhdan and Lusztig associated with any left cell a representation which could be read off from its $W$-graph, which is a graph having as set of vertices the elements of the cell and edges between any pair $x<y$ labelled by $\mu_{x,y}$. The 0-1 conjecture would have implied that these labels are unnecessary. A counterexample to this weaker version of the 0-1 conjecture was exhibited in \cite{mclarnanwarrington} and relied on computer computations.

By \cite[Corollary 6.3]{wooyong}, if $x$, $y$, $v$, $w$ are as in Theorem \ref{Thm_Main}, then $P_{x,y}(q)=P_{v,w}(q)$, so that in particular we have the following corollary:
\begin{Corollary}\label{Cor_01}
If $x$, $y$, $v$, $w$ are as in Theorem~{\rm \ref{Thm_Main}}, then $\mu_{x,y}=\mu_{v,w}$.
\end{Corollary}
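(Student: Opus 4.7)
The plan is to reduce Corollary \ref{Cor_01} to the preceding sentence's observation that $P_{x,y}(q) = P_{v,w}(q)$, which in turn is an application of \cite[Corollary 6.3]{wooyong} to the interval pattern embedding produced by our algorithm. So there is essentially no new content to prove; the task is to pin down why the definition of $\mu$ transports cleanly along that equality.

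First, I would invoke Theorem \ref{Thm_Main}, noting that its proof exhibits $[x,y]$ as embedding into $[v,w]$ as an interval pattern embedding in the sense of Definition \ref{defn_patternavoiding} (this is precisely the content established in the proof of Theorem \ref{Thm_Main} via iterating the construction of Theorem \ref{thm:asyouwish}). Then I would cite \cite[Corollary 6.3]{wooyong}, which asserts that interval pattern embeddings preserve Kazhdan--Lusztig polynomials, giving the equality $P_{x,y}(q) = P_{v,w}(q)$ as an identity of polynomials in $\mathbb{Z}[q]$.

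Next, I need to verify that the monomial from which $\mu$ is read off matches on the two sides: that is, $\ell(y) - \ell(x) = \ell(w) - \ell(v)$. This is built into the definition of interval pattern embedding (which requires $[x,y] \simeq [v,w]$ as posets) and is in any case verified at each step of the proof of Theorem \ref{Thm_Main}, where the explicit computation $\ell(x') = \ell(x) + (t-1)(k-1)$ and $\ell(y') = \ell(y) + (t-1)(k-1)$ shows the length difference is preserved under one iteration, hence under the full construction. Therefore the coefficient of $q^{(\ell(y)-\ell(x)-1)/2}$ in $P_{x,y}(q)$ equals the coefficient of $q^{(\ell(w)-\ell(v)-1)/2}$ in $P_{v,w}(q)$, which is exactly $\mu_{x,y} = \mu_{v,w}$.

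There is no real obstacle: the main Theorem \ref{Thm_Main} and Woo--Yong's result do all the work. The only thing worth flagging explicitly is the length-difference preservation, since without it one could in principle worry that the two $\mu$-coefficients are being extracted from different monomials of the same polynomial.
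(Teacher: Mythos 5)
Your proposal is correct and matches the paper's argument: the paper likewise derives the corollary from \cite[Corollary 6.3]{wooyong} applied to the interval pattern embedding underlying Theorem~\ref{Thm_Main}, giving $P_{x,y}(q)=P_{v,w}(q)$. Your explicit check that $\ell(y)-\ell(x)=\ell(w)-\ell(v)$ (so that $\mu$ is read off the same monomial on both sides) is left implicit in the paper but is exactly the right point to flag.
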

It is hence enough to find a pair of permutations $x$, $y$ not necessarily in the same cell with $\mu_{x,y}>1$ to produce a pair $v^{-1}$, $w^{-1}$ of elements belonging to the same left cell and satisfying $\mu_{v^{-1},w^{-1}}>1$ (as $P_{v,w}=P_{v^{-1},w^{-1}}$).
Since $v$ and $w$ are in the same right cell, $v\inv$ and $w\inv$ are in the same left cell.
Therefore, the failure of the weaker conjecture can be now deduced by combining McLarnan--Warrington's counterexample to the 0-1 conjecture together with Corollary~\ref{Cor_01}. As an illustration, we compute explicitly the obtained counterexample to the weaker conjecture.

Let us identify $S_{10}$ with the permutation of the set $\{0,1,\dots, 9\}$. The second part of \cite[Theorem~1]{mclarnanwarrington} says that if
\[
x=[4321098765],\qquad y=[9467182350]
\]
then $\mu_{x,y}=4$.
The algorithm in the proof of Theorem \ref{thm:asyouwish} applied to $x$, $y$ outputs the pair
\begin{gather*}
 v=[nopqrhijklcdef7845296310smgba],\\
 w=[nopqrhijklcdef78452s9bg1m36a0],
\end{gather*}
where we identified $S_{29}$ with the permutations of $\{0,\dots, 9, a,b,\dots, s\}$.

The pair $v^{-1}$, $w^{-1}$ is an instance of two permutations lying in the same left cell and having $\mu_{v^{-1},w^{-1}}=\mu_{x^{-1},y^{-1}}=\mu_{x,y}=4>1$. Thanks to our main result, this pair can be obtained more conceptually. We want to point out that our counterexample does not appear in the smallest possible rank. In fact the counterexample provided in the second part of \cite[Theorem~1]{mclarnanwarrington} lies in~$S_{16}$ (and has $\mu_{v,w}=5$).

\subsection{Bases of Specht modules}\label{sec:specht}

Consider the Hecke algebra $H_n(q)$ of the symmetric group $S_n$. Let $\{H_x\}_{x\in S_n}$ be the Kazhdan--Lusztig basis of $H_n(q)$ as a $\mathbb{Z}\big[q^{\pm \frac{1}{2}}\big]$-module.
This is a cellular basis (cf.\ \cite[Example~1.2]{grahamlehrer}). Let~$\la$ be a partition and~$Q$ a standard tableau of shape~$\la$. Since the Kazhdan--Lusztig basis is cellular, the set
\[
\{H_x\mid Q(x)=Q\}
\]
is a basis of the Specht module $S^\la$ which does not depend on the choice of $Q$. We call this the KL basis of $S^\lambda$.
Jensen \cite{thorge} studies the analogous situation for the $p$-Kazhdan--Lusztig basis $\{^pH_x\}_{x\in S_n}$ of~$H_n(q)$~-- defined in terms of parity sheaves on the flag variety. Here $p$ is a prime. One defines left, right and two-sided $p$-cells analogously to the case of the Kazhdan--Lusztig basis. Jensen \cite[Theorem~4.33]{thorge} shows that these $p$-cells are the same as the Kazhdan--Lusztig cells. Furthermore, in \cite[Corollary~4.39]{thorge}, he shows that
\[
\{^pH_x\mid Q(x)=Q\}
\]
is a basis of the Specht module $S^\la$ which does not depend on the choice of $Q$. We call this the $p$-KL basis of~$S^\lambda$.\footnote{After this was written, Jensen \cite{jensencellular} further proved that the $p$-Kazhdan Lusztig basis is cellular.}

Define the transition matrix $(^pm_{xy})_{x,y\in S_n}$ by
\[
^pH_y = \sum_x {}^pm_{xy} H_x.
\]
By Theorem \ref{Thm_WY} and \cite[Theorem 3.7]{williamson14}, if the pair $(x,y)$ is such that $x\neq y$ and $x$ is a maximal (w.r.t.\ the Bruhat order) such that ${}^pm_{xy}\neq 0$ then the pair $(v,w)$ has the same property for any interval pattern embedding of $[x,y]$ into $[v,w]$.
Thus, we obtain a third immediate application of Theorem~\ref{Thm_Main}.

\begin{Corollary}\label{Cor_Specht}
If $x$, $y$, $v$, $w$ are as in Theorem~{\rm \ref{Thm_Main}} and $x$ is a maximal element $\neq y$ such that ${}^pm_{xy}\neq 0$ then ${}^pm_{v,w}\neq 0$.
\end{Corollary}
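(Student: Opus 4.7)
The plan is to reduce the statement to a single-step interval pattern embedding claim and then invoke the iterative nature of the algorithm underlying Theorem \ref{Thm_Main}. That proof exhibits $(v,w)$ as the terminal pair of a finite chain $(x,y) = (x_0,y_0), (x_1,y_1), \ldots, (x_k,y_k) = (v,w)$ with each transition $(x_i,y_i) \rightsquigarrow (x_{i+1},y_{i+1})$ an interval pattern embedding coming from \eqref{eq:prime}. Thus it is enough to establish the following single-step claim: if $[a,b]$ is interval-pattern-embedded into $[c,d]$ and $a$ is a maximal element strictly below $b$ with ${}^pm_{a,b} \neq 0$, then $c$ is a maximal element strictly below $d$ with ${}^pm_{c,d} \neq 0$. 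A straightforward induction on the length of the chain then yields the corollary.

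For the single-step claim, the defining data of an interval pattern embedding supplies a common embedding $\Phi$ realising a poset isomorphism $[a,b] \simeq [c,d]$, and this isomorphism restricts to interval pattern embeddings $[a'',b] \hookrightarrow [c'',d]$ for every $a'' \in [a,b]$ with corresponding image $c'' \in [c,d]$ (the same $\Phi$ works, and the relevant subinterval inherits a poset isomorphism). Theorem \ref{Thm_WY} then produces a smooth equivalence between the singularity of $X_b$ at $a''$ and the singularity of $X_d$ at $c''$. By \cite[Theorem 3.7]{williamson14}, the coefficient ${}^pm_{a'',b}$ equals the rank of a stalk of a $p$-parity sheaf on $X_b$ at $a''$, which is a smooth-local invariant. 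Combining these two facts gives ${}^pm_{a'',b} = {}^pm_{c'',d}$ for every such pair, so the maximality property is transported verbatim from $(a,b)$ to $(c,d)$.

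The only substantive input is the local nature of the transition coefficients, namely \cite[Theorem 3.7]{williamson14} together with the standard fact that parity sheaves pull back along smooth morphisms up to a degree shift, so that the extra affine factor in Theorem \ref{Thm_WY} does not affect stalk ranks. Everything else is bookkeeping, and I anticipate no genuine obstacle in carrying out the argument.
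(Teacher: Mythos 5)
Your overall strategy --- reduce to a single interval pattern embedding step produced by \eqref{eq:prime}, then transport the relevant invariant through the smooth equivalence of Theorem~\ref{Thm_WY} --- is the same as the paper's, and the reduction to single steps is fine. The gap is in the central claim that ${}^pm_{a'',b}={}^pm_{c'',d}$ for \emph{every} $a''\in[a,b]$ on the grounds that ``${}^pm_{a'',b}$ equals the rank of a stalk of a $p$-parity sheaf on $X_b$ at $a''$''. That is not what \cite[Theorem~3.7]{williamson14} says, and it is false as stated: the stalk ranks of the parity sheaf are the $p$-Kazhdan--Lusztig polynomials, whereas the transition coefficients ${}^pm_{a'',b}$ are obtained from them by inverting the unitriangular matrix of ordinary Kazhdan--Lusztig polynomials. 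For a non-maximal $a''$ the coefficient therefore involves correction terms coming from the intermediate elements $x''$ with $a''<x''<b$ (and hence from the Schubert varieties $X_{x''}$, which the embedding of $[a,b]$ into $[c,d]$ does not obviously control), so it is not a smooth-local invariant of the singularity of $X_b$ at $a''$ alone. Williamson's theorem provides locality precisely for the \emph{maximal} $a''\neq b$ with nonzero coefficient, where the coefficient reduces to a comparison of the parity stalk with the $IC$ stalk at that single point; this is exactly why the maximality hypothesis appears in the statement of the corollary.

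The argument is repaired by invoking locality only at the maximal element. You still need to check that maximality itself is transported (otherwise the single steps cannot be iterated): if some $c''$ with $c<c''<d$ had ${}^pm_{c'',d}\neq 0$, take it maximal; the restricted subinterval embedding $[a'',b]\hookrightarrow[c'',d]$ and the maximal-case locality then force the parity and $IC$ stalks of $X_b$ to differ at $a''$, hence ${}^pm_{z,b}\neq 0$ for some $z\geq a''>a$ with $z\neq b$, contradicting the maximality of $a$. (Your stronger claim that the entire transition matrix on the interval is preserved may well be true, but proving it requires showing that the subinterval embeddings preserve all the intermediate Kazhdan--Lusztig polynomials, not an appeal to stalk ranks.)
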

If we can find $x$ and $y$ with $x\neq y$, $Q(x)=Q(y)$ and $^pm_{xy}\neq 0$, then the KL and $p$-KL bases of the corresponding Specht module will disagree. Thus, Corollary \ref{Cor_Specht} is designed to provide such examples, whenever we can find any $x\neq y$ maximal with the property that $^pm_{xy}\neq 0$. For instance, in Example \ref{eg:ks}, $^2m_{v,w}\neq 0$ \cite[Section~5.6]{thorgegeordie}, and $P(v)=P(w)$ have shape $\lambda=(4,4,2,2)$. Hence the KL basis of $S^{(4,4,2,2)}$ disagrees with the 2-KL basis.

For instance, transferring the torsion explosion examples from \cite{explosion} and applying Proposition~\ref{prop:bound}, we get infinitely many examples where these bases differ with $n<A(\log p)^2$ for some constant $A$ (we have made no effort to optimise this bound).

By transferring the examples of non-perverse parity sheaves from \cite{parity}, we see that the change of basis matrix within a Specht module can contain polynomials in $q$ of arbitrarily large degree.

The $W$-graph for the KL basis of a Specht module is bipartite. Since
Nguyen \cite[Theorem~9.8]{nguyen} shows that all strongly connected admissible $W$-graphs in type $A$ are the Kazhdan--Lusztig ones, this implies that whenever the $p$-KL basis differs from the KL basis, the corresponding $W$-graph for the $p$-KL basis is not bipartite. An explicit example of a non-bipartite $W$-graph for $p=2$ in type $C_3$ was previously exhibited in~\cite{thorge}.

\subsection*{Acknowledgements}
The authors would like to thank the Institut Henri Poincar\'e in Paris, and the organisers of the ``Representation Theory'' Trimester. M.L.~acknowledges the MIUR Excellence Department Project awarded to the Department of Mathematics, University of Rome Tor Vergata, CUP E83C18000100006, and the PRIN2017 CUP E8419000480006. P.M.~acknowledges support from ARC grants DE150101415 and DP180103150.
We thank G.~Williamson for useful conversations and the anonymous referees for their valuable input.

\pdfbookmark[1]{References}{ref}
\LastPageEnding

\end{document}